  \newtheorem{defi}{Definition}[section]
  \newtheorem{prob}[defi]{Problem}
  \newtheorem{prop}[defi]{Proposition}
  \newtheorem{thm}[defi]{Theorem}%[defi]
\newcommand{\para}{\vspace{3pt plus 2pt minus 1pt}
\refstepcounter{defi}\textbf{\arabic{section}.\arabic{defi}}\, }
\newcommand{\vx}{{\bm x}}
\newcommand{\PP}{{\mathbb P}}
\newcommand{\RR}{{\mathbb R}}
\newcommand{\T}{{\mathrm{T}}}
\newcommand{\GL}{{\mathrm{GL}}}
\newcommand{\abc}{{\alpha,\beta,\gamma}}
\newcommand{\alphaq}{{\overline{\vphantom{\beta}\alpha}}}
\newcommand{\betaq}{{\overline\beta}}
\newcommand{\gammaq}{{\overline{\vphantom{\beta}\gamma}}}
\newcommand{\abcq}{{\,\alphaq,\betaq,\gammaq\,}}
\let\phi=\varphi
\let\theta=\vartheta
\newcommand{\Matrixfeld}[4]{\left#1\!\begin{array}{*{#3}{c}}#4\end{array}\!\right#2}
\newcommand{\Mat}{\Matrixfeld()}
\newenvironment{proof}%[1]%
    {\begin{trivlist} \item {\emph{Proof}.}} %#1\/:}}%
    {{}\hfill $\square$ \end{trivlist}} %%Hans {}\hfill erg{\"a}nzt und \/ gel{\"o}scht
\newcounter{abbildung} %% Zaehler fuer Abbildungen
\begin{document}
%\selectlanguage{german}
%-------------------------------------------
% if you write in English, please uncomment
% the following line i.e. remove the % -sign
%-------------------------------------------
\selectlanguage{english}

\vspace*{0.7cm} \centerline{\LARGE\bf
%-------------------------------------------
% Bitte in die nachstehende Leerzeile den TITEL
% Ihres Beitrages eingeben /
% Please enter the TITLE in the following
% blank line
%-------------------------------------------
HIGHER ORDER CONTACT}

\vspace*{0.7cm} \centerline{\LARGE\bf ON CAYLEY'S RULED CUBIC SURFACE}
 \vspace*{1.4cm}

\centerline{\large Hans Havlicek (Wien)
%-------------------------------------------
% Bitte in die nachstehende Leerzeile den
% AUTORENNAMEN und den ORT eintragen /
% Please enter the AUTHOR'S NAME and LOCATION
% in the following blank line
% z.B. / e.g.
% Max Mustermann (Dresden)
%-------------------------------------------
} \vspace*{0.6cm}

{\small
%-------------------------------------------
% Bitte nachstehend den ABSTRACT eintragen /
% Please enter the ABSTRACT after the next
% line
%-------------------------------------------
\emph{Abstract}: It is well known that Cayley's ruled cubic surface carries a
three-parameter family of twisted cubics sharing a common point, with the same
tangent and the same osculating plane. We report on various results and open
problems with respect to contact of higher order and dual contact of higher
order for these curves.
 }

 {\small
 \emph{Keywords}: Cayley surface, twisted cubic, contact of higher order, dual
contact of higher order, twofold isotropic space.

}
%-------------------------------------------
% Bitte nachstehend den TEXT eingeben /
% Please enter the main TEXT after this
% comment
%
%-------------------------------------------
%
% BITTE BEACHTEN:
% Weitere UEBERSCHRIFTEN mit den Befehlen \section{..}
% und \subsection{..} bilden,
% das \section zum Eintragen der Ueberschrift ist unten bereits
% vorgegeben
%
% Abgesetzte GLEICHUNGEN z.B. mit
% \vspace{-0.8cm}
% \begin{equation} ... \end{equation}
% \vspace{-0.3cm}
% eingeben,
% damit eine Nummerierung erfolgt, falls erwuenscht
%
% Fuer LITERATURANGABEN am Ende des Textes bitte
% \begin{thebibliography}{00}...\end{thebibliography}
% \bibitem{...} Autor
% benutzen
%-------------------------------------------

\section{Introduction}\label{se:introduction}

\para
The author's interest in higher order contact of twisted cubics and in Cayley's
ruled cubic surface arose some time ago when investigating a three-dimensional
analogue of Laguerre's geometry of spears in terms of higher dual numbers. We
refer to our joint paper with Klaus List \cite{havl+l-03a} for further details.
\par

It is well known that Cayley's ruled cubic surface $F$ or, for short, the
\emph{Cayley surface\/}, carries a three-parameter family of twisted cubics
$c_\abc$; cf.\ formula (\ref{eq:abc_proj}) below. All of them share a common
point $U$, with the same tangent $t$ and the same osculating plane $\omega$,
say. Thus all these curves touch each other at $U$; some of them even have
higher order contact at $U$.
\par

Among the curves $c_\abc$ are the \emph{asymptotic curves\/} of $F$. They form
a distinguished subfamily. When speaking here of asymptotic curves of $F$ we
always mean asymptotic curves other than generators. In a paper by Hans
Neudorfer, which appeared in the year 1925, it is written that the osculating
curves have contact of \emph{order four\/} at $U$. This statement can also be
found elsewhere, e.g.\ in \cite[p.~232]{mueller+k3-31}. Neudorfer did not give
a formal proof. He emphasized instead that his statement would be immediate,
since under a projection with centre $U$ (onto some plane) the images of the
asymptotic curves give rise to a pencil of hyperosculating conics
\cite[p.~209]{neud-25a}. Ten years had to pass by before Walter Wunderlich
determined a correct result in his very first publication
\cite[p.~114]{wund-35a}. He showed that distinct asymptotic curves have contact
of \emph{order three\/} at $U$, even though their projections through the
centre $U$ have order four, as was correctly noted by Neudorfer. The reason for
this discrepancy is that the centre of projection coincides with the point of
contact of the asymptotic curves.
\par
The inner geometry of Cayley's surface was investigated thoroughly by Heinrich
Brauner in \cite{brau-64}. He claimed in \cite[pp.~96--97]{brau-64} that two
twisted cubics of our family $c_\abc$ with contact of order four are identical.
However, this contradicts a result in \cite[p.~126]{havl+l-03a}, where a
one-parameter subfamily of twisted cubics with this property was shown to
exist.
\par
The author accomplished the task of describing the order of contact between the
twisted cubics $c_\abc$ in \cite{havl-04z}. The proof consists of calculations
which are elementary, but at the same time long and tedious, whence a computer
algebra system (Maple) was used. This could explain why we could not find such
a description in one of the many papers on the Cayley surface.

\par\para
The aim of the present note is to say a little bit more about the results
obtained in \cite{havl-04z}, where we did not only characterize contact of
higher order at $U$, but also the order of contact of the associated dual
curves (\emph{cubic envelopes\/}) at their common plane $\omega$. Furthermore,
we state open problems, since for some of those results no geometric
interpretation seems to be known.

\par
There is a wealth of literature on the Cayley surface and its many fascinating
properties. We refer to \cite{brau-67c}, \cite{dill+k-99a}, \cite{gmai+h-04z},
\cite{havl-04z}, \cite{husty-84}, \cite{koch-68}, \cite{nomizu+s-94a},
\cite{oehler-69}, and \cite{wiman-36}. This list is far from being complete,
and we encourage the reader to take a look at the references given in the cited
papers.

\section{The Cayley surface}\label{se:cayley}

\para
In this note we consider the three-dimensional real projective space and denote
it by $\PP_3(\RR)$. A point is of the form $\RR\vx$ with
$\vx=(x_0,x_1,x_2,x_3)^\T \in \RR^{4\times 1}$ being a non-zero column vector.
We consider the plane $\omega$ with equation $x_0=0$ as the \emph{plane at
infinity}, and we regard $\PP_3(\RR)$ as a projectively closed affine space. We
shall use some notions from projective differential geometry without further
reference. All this can be found in \cite{bol-50} and \cite{degen-94}.

\par\para The following is taken from \cite{havl-04z}, where we followed
\cite{brau-64}. \emph{Cayley's} (\emph{ruled cubic\/}) \emph{surface} is, to
within collineations of $\PP_3(\RR)$, the surface $F$ with equation
\begin{equation}\label{eq:cayley}
  3x_0x_1x_2-x_1^3-3x_3x_0^2=0.
\end{equation}
The Cayley surface contains the line $t:x_0=x_1=0$, which is a torsal generator
of second order and at the same time a directrix for all other generators of
$F$. The point $U=\RR(0,0,0,1)^\T$ is the cuspidal point on $t$; it is a
so-called \emph{pinch point} \cite[p.~181]{mueller+k3-31}. No generator of $F$
other than $t$ passes through $U$. Each point of $t\setminus\{U\}$ is incident
with precisely one generator $\neq t$. Likewise, each plane through $t$ other
than $\omega$ intersects $F$ residually along a generator $\neq t$. The plane
$\omega$ meets $F$ at $t$ only. See Figure \ref{abb1}.
\par
The set of all matrices
\begin{equation}\label{eq:M.abc}
  M_{a,b,c}:= \Mat4{
  1&0&0&0\\
  a&c&0&0\\
  b&ac&{c}^{2}&0\\
  ab-\frac{1}{3}{a}^{3}&bc&a{c}^{2}&{c}^{3}
  }
\end{equation}
where $a,b \in \RR$ and $c \in \RR\setminus\{0\}$ is a three-parameter Lie
group, say $G$, under multiplication. Confer \cite[p.~96]{brau-64}, formula
(9). All one- and two-parameter subgroups of $G$ were determined in
\cite[pp.~97--101]{brau-64}.
\par
The group $G$ acts faithfully on $\PP_3(\RR)$ as a group of collineations
fixing $F$. Under the action of $G$, the points of $F$ fall into three orbits:
$F\setminus\omega$, $t\setminus\{U\}$, and $\{U \}$. The group $G$ yields
\emph{all} collineations of $F$; cf.\ \cite[Section~3]{gmai+h-04z}, where this
problem was addressed in the wider context of an arbitrary ground field. Since
we did not exclude fields of characteristic $3$ in that article, it was
necessary there to rewrite equation (\ref{eq:cayley}) in a slightly different
form.
%%%%%%%%%%%%%%%%%%%%%%%%%%%%%%%%%%%%%%%%%%%%%%%%%%%%%%%%%%%%%%%%%%
{\unitlength1.0cm %% Figur mit Kuspidalpunkt
      \begin{center}\small
      \begin{picture}(6.88,5.82)
         \put(0.0 ,0.0){\includegraphics[width=6.88\unitlength]{./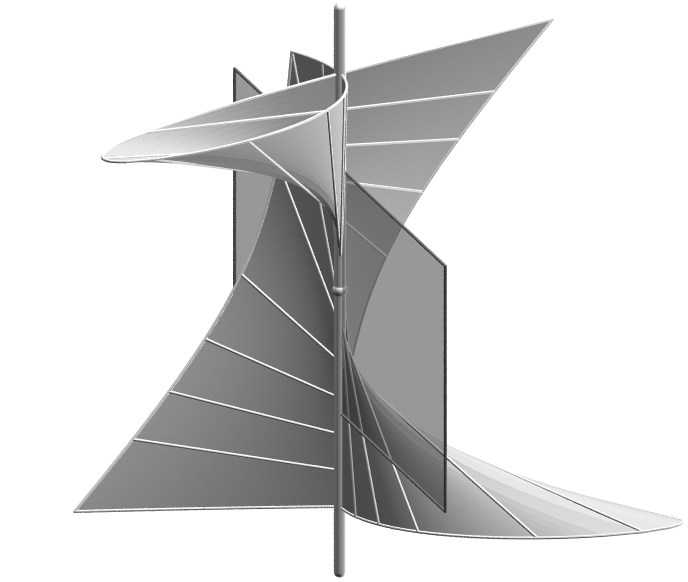}}
%%         \put(0.0 ,0.0){\includegraphics[width=7.0\unitlength]{./bilder/gitter.eps}}
         \put(3.1,5.5){$t$}
         \put(3.45,2.9){$U$}
         \put(4.1,3.0){$\omega$}
         \put(4.6,5.05){$F$}
      \end{picture}\\
      {\refstepcounter{abbildung}\label{abb1}
      {Figure \ref{abb1}}}
\end{center}
}%
%%%%%%%%%%%%%%%%%%%%%%%%%%%%%%%%%%%%%%%%%%%%%%%%%%%%%%%%%%%%%%%%%%

\section{A family of cubic parabolas}

\para We now turn to the family of twisted cubics on $F$ already mentioned in the
introduction. From an affine point of view all these twisted cubics are
\emph{cubic parabolas}, i.e., the plane at infinity is one of their osculating
planes. For the sake of completeness we state the next result together with its
short proof:

\begin{prop}\label{prop:kubiken}
Let $\alpha, \beta, \gamma \in \RR$ such that $\beta \ne 0, 3$. Then
\begin{equation}\label{eq:abc_proj}%% LAYOUT negative Abst{\"a}nde
c_{\alpha, \beta, \gamma}\! :=\! \left\{\!\RR\big(1, u-\gamma,
 \frac{u^2+\alpha}{\beta},
 \frac{u-\gamma}{3\beta}\big(3(u^2+\alpha) - \beta(u-\gamma)^2\big)\big)^\T
 \!\mid\! u\in\RR\!\cup\{\infty\}\!
 \right\}
\end{equation}
is a cubic parabola on the Cayley surface $F$. Conversely, the set formed by
all such $c_{\alpha, \beta, \gamma}$ coincides with the set of all those
twisted cubics on $F$ which contain $U$, have $t$ as a tangent line, and
$\omega$ as an osculating plane.
\end{prop}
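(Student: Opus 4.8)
The plan is to prove the two halves separately, in each case by replacing the projective data with an explicit polynomial parametrization $u\mapsto\RR(x_0(u),x_1(u),x_2(u),x_3(u))^\T$ and then comparing degrees; all the computations involved are short.

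For the direct assertion I would substitute the point of (\ref{eq:abc_proj}) into the left-hand side of (\ref{eq:cayley}). Its fourth coordinate equals $x_1x_2-\frac13x_1^3$ while $x_0\equiv1$, so the left-hand side collapses to $3x_1x_2-x_1^3-3(x_1x_2-\frac13x_1^3)=0$ identically in $u$; hence every affine point of $c_\abc$ lies on $F$. The point with $u=\infty$ is read off from the $u^3$-coefficients of the four coordinate polynomials, which are $0,0,0$ and $(3-\beta)/(3\beta)\ne0$: it is $U$, and $U\in F$ as well. Since the coordinate polynomials have degrees $0,1,2,3$ --- here $\beta\ne0$ keeps $x_2$ quadratic and $\beta\ne3$ keeps $x_3$ cubic --- they are linearly independent, so (\ref{eq:abc_proj}) parametrizes a twisted cubic. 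As $x_0\equiv1$ off $U$, the plane $\omega\colon x_0=0$ meets $c_\abc$ in $U$ alone, there with multiplicity three, so $\omega$ is the osculating plane at $U$ and $c_\abc$ is a cubic parabola; finally the vector $(0,0,1/\beta,*)$ of $u^2$-coefficients, together with $U$, spans the tangent at $U$, which is therefore $t$.

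For the converse, let $c$ be a twisted cubic on $F$ through $U$ with tangent $t$ and osculating plane $\omega$ (both at $U$, since $U$ is the only point $c$ has on the plane $\omega$ and on the line $t$). I would choose a parametrization of $c$ over $\RR\cup\{\infty\}$ carrying $\infty$ to $U$, so that $c=\{\RR(x_0(u),\dots,x_3(u))^\T\}$ with $x_0,\dots,x_3\in\RR[u]$ linearly independent of degree $\le3$, and then read the conditions at $U$ as degree conditions: $\infty\mapsto U=\RR(0,0,0,1)^\T$ forces $\deg x_0,\deg x_1,\deg x_2\le2$ and $\deg x_3=3$; the tangent being $t\colon x_0=x_1=0$ forces the $u^2$-coefficients of $x_0$ and $x_1$ to vanish, i.e.\ $\deg x_0,\deg x_1\le1$; and $\omega\colon x_0=0$ being the osculating plane at $U$ means $c\cap\omega=3U$, which forces $x_0(u)$ to be a nonzero constant. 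After rescaling, $x_0\equiv1$, and linear independence then forces $\deg x_1=1$ and $\deg x_2=2$.

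Now I would use $c\subseteq F$: substituting the parametrization into (\ref{eq:cayley}) yields $3x_1x_2-x_1^3-3x_3=0$, that is $x_3=x_1x_2-\frac13x_1^3$, so $c$ is already determined by $x_1$ and $x_2$. An affine reparametrization $u\mapsto pu+q$ --- with $q$ chosen to kill the linear term of $x_2$ and then $p$ chosen to make $x_1$ monic --- leaves $x_0\equiv1$, turns $x_1$ into $u-\gamma$ and $x_2$ into $(u^2+\alpha)/\beta$, where $\beta$ is the reciprocal of the still nonzero leading coefficient of $x_2$ (so $\beta\ne0$) and $\alpha,\gamma\in\RR$; the relation $x_3=x_1x_2-\frac13x_1^3$ is unchanged and reproduces verbatim the fourth coordinate of (\ref{eq:abc_proj}). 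Since $c$ is a twisted cubic, $x_3$ must genuinely have degree $3$; its leading coefficient is $\frac1\beta-\frac13=\frac{3-\beta}{3\beta}$, whence $\beta\ne3$. Thus $c=c_\abc$, and the two families coincide. The routine parts here are the two substitutions into (\ref{eq:cayley}) and the reparametrization bookkeeping; the step I expect to need the most care is the translation of the incidence data at $U$ into the degree conditions on the $x_i$ --- in particular the equivalence of ``$\omega$ is the osculating plane'' with ``$x_0$ is a nonzero constant'' --- and tracking exactly where the two exclusions $\beta\ne0$ and $\beta\ne3$ come from.
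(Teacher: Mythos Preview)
Your argument is correct; in particular, the translation of the flag conditions at $U$ into the degree pattern $(\deg x_0,\deg x_1,\deg x_2,\deg x_3)=(0,1,2,3)$ is sound, and the observation $x_3=x_1x_2-\tfrac13x_1^3$ on $F$ (with $x_0\equiv1$) makes the normalization step short.

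For the converse the paper proceeds differently. Instead of parametrizing $c'$ directly, it starts from the standard cubic $c=\{\RR(1,u,u^2,u^3)^\T\}$, invokes the classical fact that any two ordered triads on two twisted cubics determine a unique collineation carrying one curve to the other, and chooses the triad so that the resulting collineation $\kappa:c\to c'$ fixes $U$ and two further planes through $t$. It follows that $\kappa$ fixes the whole pencil of planes through $t$ and is described by a lower triangular matrix $A$ with $1$'s in the top-left $2\times2$ block; substituting $A\cdot(1,u,u^2,u^3)^\T$ into (\ref{eq:cayley}) then forces the relations on the entries of $A$ and leads to the parameters $\alpha,\beta,\gamma$. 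Your approach avoids the appeal to the collineation theory of twisted cubics and is in that sense more self-contained; the paper's approach, on the other hand, produces the normal form via a single linear substitution and links up naturally with the action of the group $G$ of (\ref{eq:M.abc}), which is the theme of the surrounding discussion.
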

\begin{proof}
The first assertion is immediately seen to be true by a straightforward
calculation.

In order to show the converse, we consider an arbitrary twisted cubic $c'
\subset F$ passing through $U$, touching the line $t$, and osculating the plane
$\omega$. So $c'$ is a cubic parabola. Our aim is to show that $c'=c_\abc$ for
certain elements $\abc\in \RR$ with $\beta\neq 0,3$. The auxiliary cubic
parabola
\begin{equation*}
  c= \{\RR(1,u,u^2,u^3)^\T  \mid u \in \RR \cup \{\infty\} \}
\end{equation*}
is not on $F$. By a well known result (see, e.g.\ \cite[p.~204]{brau-76-2}),
for any two triads of distinct points on $c$ and $c'$ there exists a unique
collineation of $\PP_3(\RR)$ which takes the first to the second triad and the
twisted cubic $c$ to $c'$.
\par
The cubic parabolas $c$ and $c'$ have the common point $U$, with the same
tangent $t$ and the same osculating plane $\omega$. Every plane $\pi\supset t$
with $\pi \ne \omega$ intersects $c$ and $c'$ residually at a unique point
$\neq U$. By the above, there exists a unique collineation $\kappa$, say,
taking $c$ to $c'$ such that $U$ remains fixed and such that
$\RR(1,0,0,0)^\T\in c$ and $\RR(1,1,1,1)^\T\in c$ go over to affine points of
$c'$ in the planes $x_1=0$ and $x_0-x_1=0$, respectively. As $U$ is fixed, so
are the tangent line $t$ and the osculating plane $\omega$. Hence $\kappa$ is
an affinity. Altogether, $\kappa$ fixes three distinct planes through $t$.
Consequently, all planes of the pencil with axis $t$ remain invariant. Thus
$\kappa$ can be described by a lower triangular matrix of the form
\begin{equation}\label{}
A := \Mat4{
  1 &  &  &  \\
  0 & 1 &  &  \\
  a_{20} & a_{21} & a_{22} &  \\
  a_{30} & a_{31} & a_{32} & a_{33}  } \ \in \GL_4(\RR).
\end{equation}
By substituting $A \cdot (1,u,u^2,u^3)^{\T}$ in the left hand side of
(\ref{eq:cayley})  we get the polynomial identity
\begin{equation}
 3\left(\big(a_{22}-a_{33}-\frac13\big)u^3+ (a_{21}-a_{32})u^2
 + (a_{20}-a_{31})u - a_{30}\right)=0 \
\mbox{ for all } \ u \in \RR.
\end{equation}
The coefficients at the powers of $u$ have to vanish. So we obtain
\begin{equation}\label{eq:bedingungen}
  a_{30}=0,\; a_{31}=a_{20},\; a_{32}=a_{21},
  \mbox{ and } a_{33}=a_{22}-\frac13=0.
\end{equation}
Also, since $A$ is regular, $a_{22} \ne 0,\frac13$. Taking into account
(\ref{eq:bedingungen}) the column vector $A \cdot (1,u,u^2,u^3)^{\T}$ turns
into
\begin{equation}\label{}
    \left(1,u,a_{20}+ a_{21}u+ a_{22}{u}^{2},a_{20}u+a_{21}{u}^{2}+
    \big(a_{22}-\frac13\big){u}^{3}\right)^\T.
\end{equation}
Then, putting $\beta:=1/a_{22}$, $\gamma:=a_{21}b/2$, $\alpha:=a_{20}b-c^2$,
and $u':=u-c$ shows that $c'=c_{\alpha, \beta, \gamma}$, as required.
\end{proof}

\par\para
If we allow the exceptional value $\beta=3$ in (\ref{eq:abc_proj}) then the
vector in that formula simplifies to
\begin{equation}\label{}
    \big( 1, u-\gamma, \frac{u^2+\alpha}{3},
    \frac{u-\gamma}{3}\big( \alpha+2u\gamma-\gamma^{2}    \big)\big)^\T.
\end{equation}
In fact, we get in this particular case a parabola, say $c_{\alpha,3,\gamma}$,
lying on $F$. Each parabola of this kind is part of the intersection of the
Cayley surface $F$ with one of its tangent planes. Clearly, we cannot have
$\beta=0$ in (\ref{eq:abc_proj}).
\par
Each curve $c_\abc$ ($\beta\neq 0$) is on the \emph{parabolic cylinder\/} with
equation
\begin{equation}\label{eq:zylinder}
    \alpha x_0^2-\beta x_0x_2+(x_1+\gamma x_0)^2 = 0.
\end{equation}
In projective terms the vertex of this cylinder is the point $U$. (The
intersection of this cylinder with the plane $x_3=0$ gives the projection of
$c_\abc$ used by Neudorfer in \cite{neud-25a}.) The mapping $( \abc ) \mapsto
c_\abc$ is injective, since different triads $(\abc)\in\RR^3$, $\beta\neq 0$,
yield different parabolic cylinders (\ref{eq:zylinder}).
\par
It should be noted here that Proposition~\ref{prop:kubiken} does \emph{not}
describe all twisted cubics on $F$. There are also twisted cubics on $F$ with
two distinct points at infinity. We shall not need this result, whence we
refrain from a further discussion.

\par\para
The the group $G$ of all matrices (\ref{eq:M.abc}) acts on the family of all
cubic parabolas $c_\abc$. Each matrix $M_{a,b,c}\in G$ takes a cubic parabola
$c_\abc$ to a cubic parabola, say $c_{\alphaq,\betaq,\gammaq}$. The values
$\abcq$ are given as follows:
\begin{eqnarray}\label{eq:alpha}
   \alphaq &=& -a^2\,\frac{\beta^2}4-a c\beta\gamma+c^2\alpha+b\beta, \\
  \betaq &=& \beta,\label{eq:beta} \\
  \gammaq &=& a\,\frac{\beta-2}{2}+c\gamma.\label{eq:gamma}
\end{eqnarray}
See \cite[p.~97]{brau-64}, formula (12), where some signs in the formula for
$\alphaq$ are reproduced incorrectly.

\par\para\label{para:kennzeichen}
An interpretation of the three numbers $\abc$, associated with each of our
cubic parabolas, can be given in terms of a \emph{distance function\/} on
$F\setminus t$ which is due to Brauner. This distance function assumes all real
numbers and the value $\infty$; it does not satisfy the triangle inequality.
Moreover, the distance from a point $X$ to a point $Y$ is in general
\emph{not\/} the distance from $Y$ to $X$. The distance function is a
$G$-invariant notion. See \cite[pp.~115]{brau-64} for further details.

By (\ref{eq:beta}), the parameter $\beta$ is a $G$-invariant notion, whereas
$\alpha$ and $\gamma$ describe, loosely speaking, the ``position'' of the cubic
parabola on the Cayley surface. To be more precise, up to one exceptional case
the following holds: Each curve $c_\abc$ contains a \emph{circle\/} with
midpoint $M$ and radius $\rho\neq 1$ in the sense of Brauner's distance
function. Due to the specific properties of this distance function, the
midpoint $M$ is also on $c_\abc\setminus\{U\}$. (The tangent to $c_\abc$ at a
midpoint is an osculating tangent.) The parameter $\beta$ equals
$2\rho/(\rho-1)$, whereas $\alpha$ and $\gamma$ can be expressed in terms of
the midpoint $M$ and the radius $\rho$. All this can be read off from formula
(70) in \cite[pp.~116]{brau-64}.

Only the curves $c_\abc$ with $\beta=2$ and $\gamma\neq 0$ do not allow for
this interpretation, since none of their tangents at an affine point is an
osculating tangent of $F$. On the other hand, the asymptotic curves of $F$ are
obtained for any $\alpha\in\RR$, $\beta=2$, and $\gamma=0$, whence
$\rho=\infty$. They play a special role: \emph{Each affine point of an
asymptotic curve can be considered as a midpoint}.
\par
In \cite[Section~5]{gmai+h-04z} an alternative approach to Brauner's distance
function was given in terms of cross-ratios. As a matter of fact, the distance
functions in \cite{brau-64} and \cite{gmai+h-04z} are identical up to a
bijection of $\RR\cup\{\infty\}$. This difference became necessary in order to
establish our results in \cite{gmai+h-04z} for a wider class of projective
spaces. \emph{Each distance preserving mapping $\phi:F\setminus t\to F\setminus
t$ comes from a unique matrix in $G$} \cite[Theorem~5.5]{gmai+h-04z}. No
assumptions like injectivity, surjectivity or even differentiability of $\phi$
are needed for the proof. This means that Brauner's distance function is a
\emph{defining function\/} (see \cite[p.~23]{benz-94}) for the Lie group $G$.

\par\para
We now turn to our original problem of describing the order of contact at $U$
of our cubic parabolas given by (\ref{eq:abc_proj}). Also, we state necessary
and sufficient conditions for their \emph{dual curves} (formed by all
osculating planes) to have contact of a prescribed order at $\omega$. We
shortly speak of \emph{dual contact\/} in this context. Recall that the
distinction between contact and dual contact cannot be avoided in three
dimensional projective differential geometry, whereas for planar curves the
concepts of contact and dual contact are self-dual notions. We refer also to
\cite[Theorem~1]{degen-88} for explicit formulas describing contact of higher
order between curves in $d$-dimensional real projective space.

Since twisted cubics with (dual) contact of order five are identical
\cite[pp.~147--148]{bol-50}, we may restrict our attention to distinct curves
with (dual) contact of order less or equal four. The following was shown in
\cite[Theorem~1]{havl-04z} and \cite[Theorem~3]{havl-04z}:

\begin{thm}\label{thm:1}
Distinct cubic parabolas $c_\abc$ and $c_\abcq$ on Cayley's ruled surface have
\vspace{-2ex}
\begin{enumerate}\itemsep0ex
\item second order contact at $U$ if, and only if, $\beta=\betaq$ or
$\beta=3-\betaq$;

\item third order contact at $U$ if, and only if, $\beta=\betaq$ and
$\gamma=\gammaq$, or  $\beta=\betaq=\frac32$;

\item fourth order contact at $U$ if, and only if,
$\beta=\betaq=\frac32$ and $\gamma=\gammaq$.

\item second order dual contact at $\omega$ if, and only if, $\beta=\betaq$;

\item third order dual contact at $\omega$ if, and only if, $\beta=\betaq$ and
$\gamma=\gammaq$, or  $\beta=\betaq=\frac52$;

\item fourth order dual contact at $\omega$ if, and only if,
$\beta=\betaq=\frac73$ and $\gamma=\gammaq$.

\end{enumerate}
\end{thm}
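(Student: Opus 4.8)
The plan is to reduce everything to a single normalization step followed by a routine power-series comparison. First I would exploit the group $G$ to simplify the pair of curves: by (\ref{eq:beta}) the parameter $\beta$ is a $G$-invariant, so acting with a suitable $M_{a,b,c}$ I may assume $\gamma=0$ for the curve $c_\abc$, and by (\ref{eq:alpha}) I may additionally normalize $\alpha=0$ (replacing $b$ by whatever kills the constant term). Thus one of the two curves becomes $c_{0,\beta,0}$, given by the simple parametrization $\RR\big(1,u,\tfrac{u^2}{\beta},\tfrac{u}{3\beta}(3u^2-\beta u^2)\big)^\T=\RR\big(1,u,\tfrac{u^2}{\beta},\tfrac{(3-\beta)u^3}{3\beta}\big)^\T$. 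The second curve then retains three parameters $\abcq$, but since (dual) contact order at $U$ (resp.\ $\omega$) is a projectively invariant notion, proving the stated equivalences for this normalized pair proves them in general, as long as I keep track of how the conditions $\beta=\betaq$, $\gamma=\gammaq$, etc.\ transform — which is immediate, since $\beta$ is invariant and, once $\gamma$ is normalized to $0$, the relevant condition $\gamma=\gammaq$ becomes $\gammaq=0$.

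Next I would set up the contact computation at $U$. Near $U$ the curves are best described in the affine chart where $x_3\neq 0$; dehomogenizing, a point of $c_\abc$ becomes $\big(\tfrac{1}{w},\tfrac{u}{w},\tfrac{u^2}{\beta w},1\big)$ with $w=\tfrac{u}{3\beta}(3(u^2+\alpha)-\beta(u-\gamma)^2)$, and as $u\to\infty$ this tends to $U$. Introducing the local parameter $s=1/u$, I would expand the three affine coordinates as power series in $s$; the torsal line $t$ and the osculating plane $\omega$ being common to all curves means the leading terms agree up to order two automatically, and the order of contact is read off as the largest $k$ such that the two series agree through degree $k$ after a common reparametrization (allowing an $s\mapsto s+O(s^2)$ change on one side). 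Concretely I would follow \cite[Theorem~1]{degen-88} or simply compute the Wronskian-type determinant whose vanishing to successive orders encodes contact of order $2,3,4$. The dual-contact statements (d)--(f) are handled the same way after passing to the dual curve: the osculating plane of $c_\abc$ at the point with parameter $u$ is spanned by the point, its first and second derivatives, and its coordinates (in the dual $\PP_3$) are the $2\times 2$... rather $3\times 3$ minors of the $3\times 4$ derivative matrix; one then repeats the local expansion at the common plane $\omega$.

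The routine but genuinely laborious part — and the main obstacle — is organizing the coefficient comparison so that the peculiar threshold values $\tfrac32$ (for third/fourth order contact at $U$), $\tfrac52$ (for third order dual contact) and $\tfrac73$ (for fourth order dual contact) emerge transparently rather than as the output of an opaque elimination. I expect these to arise as follows: after imposing $\beta=\betaq$ (forced already at second order, by part (a)/(d) together with the injectivity of $\abc\mapsto c_\abc$ noted after (\ref{eq:zylinder})), the next coefficient difference factors as a product of $(\gamma-\gammaq)$ with a linear expression in $\beta$; its two roots are $\gamma=\gammaq$ and the special $\beta$-value, and one must check the special value is the \emph{same} for the point curve and for the dual curve only in the degenerate sense that they differ ($\tfrac32$ versus $\tfrac52$). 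For part (a) the alternative $\beta=3-\betaq$ should surface because the cubic $c_{0,\beta,0}$ and $c_{0,3-\beta,0}$ have, up to the sign of $u^3$, the same leading behaviour — indeed the map $\beta\mapsto 3-\beta$ is visibly a symmetry of the coefficient $\tfrac{3-\beta}{3\beta}$ combined with a rescaling — and I would isolate this by writing the order-three obstruction as a polynomial in $\beta,\betaq$ and factoring out $(\beta-\betaq)(\beta+\betaq-3)$. Since the paper itself remarks that the full verification was done in Maple, I would in the write-up present the normalization and the shape of the factorizations, and relegate the explicit polynomial identities to a computer-algebra check, pausing only to note the one subtlety flagged in the introduction: the centre of the Neudorfer projection lies on the curves, which is exactly why the naive ``pencil of hyperosculating conics'' argument overcounts the contact order by one, and hence why one must compute in $\PP_3$ directly rather than in the projected plane.
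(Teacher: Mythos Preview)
The present paper does not actually contain a proof of Theorem~\ref{thm:1}; it is stated with the attribution ``The following was shown in \cite[Theorem~1]{havl-04z} and \cite[Theorem~3]{havl-04z}'', and the surrounding text only describes the proof as consisting of ``calculations which are elementary, but at the same time long and tedious, whence a computer algebra system (Maple) was used.'' So there is no argument here to compare yours against in detail.

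That said, your outline is entirely consistent with that description, and your use of the $G$-action to normalize one of the two curves before expanding in a local parameter at $U$ (respectively at $\omega$ for the dual curve) is the natural way to organize such a computation. Two small points are worth tightening. First, the normalization ``assume $\gamma=0$'' via (\ref{eq:gamma}) fails precisely when $\beta=2$ and $\gamma\neq 0$, since then $\gammaq=c\gamma$ with $c\neq 0$; this exceptional family (flagged in \ref{para:kennzeichen}) must be handled separately or by a different normalization. Second, your phrase ``$\beta=\betaq$ (forced already at second order, by part (a)/(d))'' is accurate for the dual side (d) but not for the point side: part (a) allows the alternative $\beta+\betaq=3$, so in passing from (a) to (b) you must verify explicitly that this alternative branch (with $\beta\neq\tfrac32$) does \emph{not} persist to third order. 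You gesture at this when discussing the factor $(\beta-\betaq)(\beta+\betaq-3)$, but the write-up should make clear that the $\beta+\betaq=3$ branch is eliminated at the next step rather than assumed away.
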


It follows from Theorem \ref{thm:1} that cubic parabolas $c_\abc$ with
\begin{equation}\label{eq:sonderwerte}
  \beta=\frac32,\; \beta=\frac52, \mbox{ and }\beta=\frac73
\end{equation}
should play a special role. Note that none of them yields the asymptotic curves
of $F$, as they have the form $c_\abc$ with $\alpha\in\RR$, $\beta=2$, and
$\gamma=0$.

For some of the values in (\ref{eq:sonderwerte}) we could find a geometric
interpretation, but a lot of open problems remain.

\par\para
The flag $(U,t,\omega)$ turns $\PP_3(\RR)$ into a \emph{twofold isotropic} (or
\emph{flag}) \emph{space}. The definition of metric notions (distance, angle)
in this space is based upon the identification of
$\RR(1,x_1,x_2,x_3)^\T\in\PP_3(\RR)\setminus\omega$ with
$(x_1,x_2,x_3)^\T\in\RR^{3\times 1}$, and the canonical basis of $\RR^{3\times
1}$, which defines the units for all kinds of measurements in this space. See
\cite{brau-66} for a detailed description.
\par
By \cite[p.~137]{brau-67a}, each cubic parabola $c_\abc$ has
\begin{equation}\label{}
  \frac12\,\beta (3-\beta)\leq\frac98
\end{equation}
as its \emph{conical curvature\/} in the sense of the twofold isotropic space.
\emph{Among all cubic parabolas $c_\abc$, the ones with $\beta=\frac32$ are
precisely those whose conical curvature attains the maximal value $\frac98$}
\cite[Theorem~2]{havl-04z}. Of course, this is a characterization in terms of
the \emph{ambient space\/} of $F$, whence we are lead to the following
question:

\begin{prob}
Is there a characterization of the cubic parabolas $c_\abc$ with
$\beta=\frac32$ in terms of the \emph{inner geometry} on the Cayley surface?
\end{prob}
By \emph{inner geometry\/} of $F$ we mean here the geometry on $F\setminus t$
given by the action of the group $G$, in the sense of Felix Klein's Erlangen
programme.
\par\para
The next noteworthy value is $\beta=\frac52$. Presently, its meaning is not at
all understood by the author. So we can merely state the following:

\begin{prob}
Give \emph{any\/} geometric characterization of the cubic parabolas $c_\abc$
with $\beta=\frac52$.
\end{prob}

\par\para
Finally, we turn to $\beta=\frac73$. If we take two distinct cubic parabolas
$c_\abc$ and $c_\abcq$ with parameters
\begin{equation}\label{}
  \beta=\betaq=\frac73 \mbox{ and } \gamma=\gammaq=0
\end{equation}
then their osculating planes comprise two cubic envelopes lying on a certain
Cayley surface of the dual space. With respect to this dual Cayley surface the
given cubic envelopes correspond to the parameters
\begin{equation}
  \alpha'=-\frac23\,\alpha,\;
  \alphaq\,'=-\frac23\,\alphaq,\;
   \beta'=\betaq\,'=\frac32, \mbox{ and } \gamma'= \gammaq\,'= 0,
\end{equation}
respectively. This means contact of order four for the cubic envelopes. In this
way it is possible to link the results in (c) and (f) from Theorem \ref{thm:1}.
This correspondence can be established in a more general form
\cite[Theorem~4]{havl-04z}.

\par

Acknowledgement. The figure of the Cayley surface was created with the help of
a macro package (\emph{Maple goes POV-Ray}\/) written by Matthias Wielach in
one of the author's laboratory courses at the Vienna University of Technology.

%%\bibliographystyle{plain}
%%\bibliography{d:/forschung/separata/ketten}

{\small
%-------------------------------------------
% Bitte in die nachstehenden Leerzeilen die
% ADRESSE DES AUTORS eintragen /
% Please enter the AUTHOR'S detailed ADDRESS
% in the following blank lines
% z.B. / e.g.
% Max Mustermann\\
% Institut f"ur Geometrie\\
% Technische Universit"at Dresden\\
% D-01062 Dresden
%-------------------------------------------

Hans Havlicek\\
Forschungsgruppe Differentialgeometrie und Geometrische Strukturen\\
Institut f\"ur Diskrete Mathematik und Geometrie\\
Technische Universit\"at\\
Wiedner Hauptstra{\ss}e 8--10/104\\
A-1040 Wien\\
Austria\\
Email: havlicek@geometrie.tuwien.ac.at}
\end{document}